\newcommand{\R}{\mathbb R}
\newtheorem{thm}{Theorem}[section]
\newtheorem{lemma}[thm]{Lemma}
\newtheorem{proposition}[thm]{Proposition}
\theoremstyle{remark}
\newtheorem{rmk}{Remark}
\begin{document}


\title[Berwald's inequality and its applications]{An extension of Berwald's inequality and its relation to Zhang's inequality}

\author[D. Alonso]{David Alonso-Guti\'{e}rrez}
\address{\'Area de An\'alisis Matem\'atico, Departamento de Matem\'aticas, Facultad de Ciencias, Universidad de Zaragoza, Pedro Cerbuna 12, 50009 Zaragoza (Spain), IUMA}
\email[(David Alonso-Guti\'errez)]{alonsod@unizar.es}
\thanks{The first and second authors are partially supported by MINECO Project MTM2016-77710-P, DGA E26\_17R and IUMA.
This research is a result of the activity developed within the framework of the Programme in Support of Excellence Groups of the Regi\'on de Murcia, Spain, by Fundaci\'on S\'eneca, Science and Technology Agency of the Regi\'on de Murcia. The third author is partially supported by Fundaci\'on S\'eneca project 19901/GERM/15, Spain, and by MINECO Project MTM2015-63699-P Spain.}
\date{\today}

\author[J.\,Bernu\'es]{Julio Bernu\'es}
\address{\'Area de an\'alisis matem\'atico, Departamento de matem\'aticas, Facultad de Ciencias, Universidad de Zaragoza, Pedro cerbuna 12, 50009 Zaragoza (Spain), IUMA}
\email[(Julio Bernu\'es)]{bernues@unizar.es}

\author[B. Gonz\'alez]{Bernardo Gonz\'alez Merino}
\address{Departamento de An\'alisis Matem\'atico, Facultad de Matem\'aticas, Universidad de Sevilla, Apdo. 1160, 41080-Sevilla, Spain}
\email[Bernardo Gonz\'alez Merino]{bgonzalez4@us.es}
\begin{abstract}
In this note prove the following Berwald-type inequality, showing
that for any integrable log-concave function $f:\mathbb R^n\rightarrow[0,\infty)$ and any concave function $h:L\rightarrow\mathbb [0,\infty)$, where $L$  is the epigraph of $-\log \frac{f}{\Vert f\Vert_\infty}$, then
	$$p\to
	\left(\frac{1}{\Gamma(1+p)\int_L e^{-t}dtdx}\int_L h^p(x,t)e^{-t}dtdx\right)^\frac{1}{p}
	$$
is decreasing in $p\in(-1,\infty)$, extending the range of $p$ where the monotonicity is known to hold true.

As an application of this extension, we will provide a new proof of a functional form of Zhang's reverse Petty projection inequality, recently obtained in \cite{ABG}.
\end{abstract}
\maketitle
\section{Introduction and notation}

Let $K\subseteq\R^n$ be a \textit{convex body}, i.e., a compact, convex set with non-empty interior, and let us denote by $\mathcal K^n$ the set of all convex bodies in $\R^n$ and by $|K|$ the \textit{Lebesgue measure} of $K$. We will also denote by $\mathcal{K}_0^n$ the set of convex bodies containing the origin. It is well known that, as a consequence of H\"older's inequality, for any integrable function $f:K\rightarrow[0,\infty)$ the function
\[
p\to\left(\frac{1}{|K|}\int_Kf(x)^pdx\right)^\frac1p
\]
is increasing in $p\in(0,\infty)$.

A famous inequality proved by Berwald
\cite[Satz 7]{Ber} (see also \cite[Theorem 7.2]{AAGJV} for a translation into English) provides a reverse H\"older's inequality for $L_p$-norms ($p>0$) of concave functions defined on convex bodies. It states that for any $K\in\mathcal K^n$  and any concave function $f:K\to[0,\infty)$ ,
then
\begin{equation}\label{thm:GeometricBerwald}
p\to\left(\frac{{p+n\choose n}}{|K|}\int_K f(x)^{p}\,dx\right)^\frac{1}{p}
\end{equation}
is decreasing in $p\in(0,\infty)$.

A function $f:\mathbb R^n\rightarrow[0,\infty)$ is called log-concave if for every $x,y\in\R^n, 0<\lambda<1$, $f(\lambda x+(1-\lambda)y)\ge (f(x))^{\lambda}(f(y))^{1-\lambda}$. Throughout the paper, we will denote by $\mathcal F(\mathbb R^n)$ be the set of all integrable log-concave functions in $\mathbb R^n$.


In the context of log-concave functions, the following version of Berwald's inequality \eqref{thm:GeometricBerwald} on epigraphs of convex functions was proved in \cite[Lemma 3.3]{AAGJV}:

\textit{``Let $f\in\mathcal{F}(\R^n)$ and let $h:L\to[0,\infty)$ be a continuous concave non-identically null function, where $L=\{(x,t)\in\R^{n+1}\,:\,f(x)\geq e^{-t}\Vert f\Vert_\infty\}$ is the the epigraph of $-\log \frac{f}{\Vert f\Vert_\infty}$. Then, the function}
\begin{equation}\label{thm:BerwadFunctionalSmallRange}
p\to\left(\frac{1}{\Gamma(1+p)\int_L e^{-t}dtdx}\int_L h^p(x,t)e^{-t}dtdx\right)^\frac{1}{p}
\end{equation}
\textit{is decreasing in $p\in(0,\infty)$."}

When providing a new proof of Zhang's reverse Petty projection inequality, Gardner and Zhang \cite{GZ} extended \eqref{thm:GeometricBerwald} to the larger range of values $p>-1$ (see \cite[Theorem 5.1]{GZ}). The first goal in this paper is to also extend \eqref{thm:BerwadFunctionalSmallRange} to the larger range of values $p>-1$. 

\begin{thm}\label{thm:FuncBerwald>-1}
Let $f\in\mathcal F(\mathbb R^n)$ and let $h:L\to[0,\infty)$ be a concave function,
where $L=\{(x,t)\in\R^{n+1}\,:\,f(x)\geq e^{-t}\Vert f\Vert_\infty\}$. Then, the function
	$$p\to
	\left(\frac{1}{\Gamma(1+p)\int_L e^{-t}dtdx}\int_L h^p(x,t)e^{-t}dtdx\right)^\frac{1}{p}
	$$
	is decreasing in $p\in(-1,\infty)$.
\end{thm}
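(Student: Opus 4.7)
My plan is to reduce the multidimensional claim to a one-dimensional moment inequality for a log-concave density on $[0,\infty)$, and then to establish this one-dimensional statement by comparison with a moment-matching exponential. For the reduction, I would, for $s\ge 0$, introduce the convex super-level sets $A_s=\{(x,t)\in L:h(x,t)\ge s\}$ and the function
\[
\sigma(s)=\int_{A_s}e^{-t}\,dt\,dx,
\]
which is non-increasing with $\sigma(0)=\int_L e^{-t}\,dt\,dx$. Convexity of $L$ and concavity of $h$ give the Minkowski-type inclusion $(1-\lambda)A_{s_0}+\lambda A_{s_1}\subseteq A_{(1-\lambda)s_0+\lambda s_1}$ for $\lambda\in[0,1]$, and the log-concavity of $f$ implies that $\mathbf{1}_L(x,t)e^{-t}\,dt\,dx$ is a log-concave measure on $\R^{n+1}$. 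Borell's theorem then yields $\sigma((1-\lambda)s_0+\lambda s_1)\ge \sigma(s_0)^{1-\lambda}\sigma(s_1)^{\lambda}$, so $\sigma$ is log-concave on $[0,\infty)$.

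Next, the layer-cake principle gives, for every $p\in(-1,\infty)$,
\[
\int_L h^p(x,t)e^{-t}\,dt\,dx=\int_0^\infty s^p\,d(-\sigma(s)),
\]
which equals $p\int_0^\infty s^{p-1}\sigma(s)\,ds$ for $p>0$ and $-p\int_0^\infty(\sigma(0)-\sigma(s))s^{p-1}\,ds$ for $-1<p<0$; the latter integral converges at the origin because $\sigma(0)-\sigma(s)=O(s)$ by the log-concavity of $\sigma$. The theorem is therefore equivalent to showing that, for every non-increasing log-concave $\sigma:[0,\infty)\to[0,\sigma(0)]$, the functional
\[
G(p):=\left(\frac{1}{\Gamma(1+p)\sigma(0)}\int_0^\infty s^p\,d(-\sigma(s))\right)^{1/p}
\]
is non-increasing on $(-1,\infty)$.

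For the one-dimensional inequality, I would fix $p_0\in(-1,\infty)$ and set $\tilde\sigma(s)=\sigma(0)e^{-\beta s}$, with $\beta>0$ chosen so that $\sigma$ and $\tilde\sigma$ share the same $p_0$-moment; a direct computation gives $G\equiv 1/\beta$ on exponentials. Since $\log(\sigma/\tilde\sigma)$ is concave and vanishes at $s=0$, the only nontrivial situation is that there exists a threshold $a>0$ with $\sigma\ge\tilde\sigma$ on $[0,a]$ and $\sigma\le\tilde\sigma$ on $[a,\infty)$. Comparing $s^{p-1}$ with $a^{p-p_0}s^{p_0-1}$ (two power functions that agree at $s=a$ and whose pointwise inequality reverses across $s=a$) and combining with the sign of $\sigma-\tilde\sigma$ yields, for $p>p_0$,
\[
\int_0^\infty s^{p-1}(\sigma-\tilde\sigma)(s)\,ds \;\le\; a^{p-p_0}\int_0^\infty s^{p_0-1}(\sigma-\tilde\sigma)(s)\,ds \;=\; 0,
\]
and the reverse inequality for $p<p_0$. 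Translating these through the layer-cake formulas produces $G(p)\le G(p_0)$ for $p>p_0$ and $G(p)\ge G(p_0)$ for $p<p_0$, so $G$ is decreasing.

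The main subtlety will be the sign-bookkeeping as $p$ and $p_0$ each cross zero: the layer-cake expression takes different forms in the two sign regimes, and when $p_0<0$ the moment-matching identity $\int_0^\infty s^{p_0-1}(\sigma-\tilde\sigma)\,ds=0$ must be interpreted as a convergent improper integral at the origin, where again $\sigma(0)-\sigma(s)=O(s)$. Once these four sign-combinations are handled consistently, the sign chase of the previous paragraph is mechanical and delivers the monotonicity of $G$ on the full range $p\in(-1,\infty)$.
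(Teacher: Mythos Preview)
Your argument is correct and shares its overall architecture with the paper's: both reduce the claim to a one-variable moment inequality by exploiting the log-concavity of the distribution function $s\mapsto\mu(\{h\ge s\})$, and both settle the one-variable inequality by a single-crossing comparison with a moment-matched extremal.

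The execution differs in two places. First, your reduction is more direct: you apply the layer-cake formula immediately to $\sigma$, whereas the paper builds an auxiliary equidistributed function $h_1(x,t)=\sup\{s\ge 0:I_h(s)>\rho_{K_t(f)}(x)^{-n}\}$ and then integrates in polar coordinates on each section $K_t(f)$ to reach $\int_L h^p\,d\mu=\int_0^\infty\gamma_1(r)^p e^{-r}\,dr$ with $\gamma_1$ concave and non-decreasing. Second, the one-variable lemmas are dual to one another: you compare the log-concave tail $\sigma$ with an exponential $\tilde\sigma(s)=\sigma(0)e^{-\beta s}$, while the paper (Lemma~\ref{1dim}) compares the concave $\gamma_1$, which is essentially the generalized inverse of $-\log I_h$, with a linear function $\bar\gamma(r)=cr$; under the change of variables $s=\gamma_1(r)$ these become the same statement, and exponentials on your side correspond to linear functions on the paper's side. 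Your route avoids the auxiliary construction and the polar-coordinate computation, which is a genuine streamlining; the paper's packaging, on the other hand, isolates the one-dimensional step as a stand-alone lemma about concave $\gamma$ against the weight $e^{-r}dr$, which makes the equality case (linearity of $\gamma$) explicit and is convenient for reuse. Your remark that $\sigma(0)-\sigma(s)=O(s)$ is justified: since $\log\sigma$ is concave and non-increasing with $\sigma(s_0)>0$ for some $s_0>0$, its right derivative at $0$ lies in $[\,(\log\sigma(s_0)-\log\sigma(0))/s_0,\,0\,]$ and is therefore finite.
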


For any $K\in\mathcal K^n$, its \textit{polar projection body} $\Pi^*(K)$ is the unit ball of the norm given by
$$
\Vert x\Vert_{\Pi^*(K)}:=|x||P_{x^\perp}K|,\qquad x\in\R^n
$$
where $P_{x^\perp}K$ is the \textit{orthogonal projection} of $K$  onto the hyperplane orthogonal to $x$, $|\cdot|$ denotes (besides the Lebesgue measure in the suitable space) the Euclidean norm and $\Vert \cdot\Vert_K$ denotes the \textit{Minkowski functional} of $K$, defined for every $x\in\R^n$, as $\Vert x\Vert_K:=\inf\{\lambda>0\mid x\in\lambda K\}\in[0,\infty]$. It is a norm if and only if $K$ is centrally symmetric.

The expression $|K|^{n-1}|\Pi^*(K)|$ is affine invariant and its extremal convex bodies are well known: \textit{Petty's projection inequality} \cite{P} states that the (affine class of the) $n$-dimensional Euclidean ball, $B_2^n$, is the only maximizer and \textit{Zhang's inequality} \cite{Z1} proves that the (affine class of the)  $n$-dimensional simplex $\Delta_n$, is the only minimizer. That is, for any convex body $K\subseteq\R^n$,
\begin{equation}\label{eq:PettyAndZhang}
\frac{\binom{2n}{n}}{n^n}=|\Delta_n|^{n-1}|\Pi^*(\Delta_n)|\leq|K|^{n-1}|\Pi^*(K)|\leq |B_2^n|^{n-1}|\Pi^*(B_2^n)|=\frac{|B_2^n|^n}{|B_2^{n-1}|^n}.
\end{equation}

In recent years, many relevant geometric inequalities have been extended to the general context of log-concave functions (see for instance \cite{AKM}, \cite{KM}, \cite{C}, or \cite{HJM} and the references therein).
Let us recall that $\mathcal K^n$ and $\mathcal{K}_0^n$ naturally embed into $\mathcal F(\mathbb R^n)$, via the natural injections
\[
K\to\chi_K\quad\text{ and }\quad K\to e^{-\Vert\cdot\Vert_K},
\]
where $\chi_K$ is the characteristic function of $K$. These and other basic facts on convex bodies and log-concave functions used in the paper can be found in \cite{BGVV} and \cite{AGM}.

For any $f\in\mathcal F(\mathbb R^n)$, the \textit{polar projection body of $f$}, denoted as $\Pi^*(f)$,
is the unit ball of the norm given by
$$\Vert x\Vert_{\Pi^*(f)}:=2|x|\Vert f\Vert_{\infty}\!\int_{0}^{\infty}\!|P_{x^\perp}K_t(f)|\ e^{-t}dt=
2\Vert f\Vert_{\infty}\int_{0}^{\infty}\!\Vert x\Vert_{\Pi^*(K_t(f))}e^{-t}dt,$$
where $K_t(f):=\{x\in\R^n\,:\, f(x)\geq e^{-t}\Vert f\Vert_\infty\}$,  $t>0$ (see \cite{AGJV}).

In \cite{ABG}, an extension of Zhang's inequality (i.e.,~the left hand side inequality in \eqref{eq:PettyAndZhang})
was proved in the settings of log-concave functions.
\begin{thm}\label{thm:FunctionalZhang} Let $f\in\mathcal F(\mathbb R^n)$. Then,
\begin{equation}\label{eq:FunctionalZhang}
	\int_{\R^n}\int_{\R^n}\min\left\{f(y),f(x)\right\}dydx\leq 2^nn!\Vert f\Vert_1^{n+1}\left|\Pi^*\left(f\right)\right|.
\end{equation}
 Moreover, if $\Vert f\Vert_\infty=f(0)$ then equality holds if and only if $\frac{f(x)}{\Vert f\Vert_\infty}=e^{-\Vert x\Vert_{\Delta_n}}$ for some n-dimensional simplex $\Delta_n$ containing the origin.
\end{thm}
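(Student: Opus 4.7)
The plan is to transplant to the log-concave setting the strategy of Gardner--Zhang \cite{GZ}, who derived the classical Zhang inequality \eqref{eq:PettyAndZhang} from the extension of Berwald's inequality to the range $p>-1$. Here, the functional Berwald inequality of Theorem~\ref{thm:FuncBerwald>-1} will play the role of the Gardner--Zhang extension.

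As a first step I would rewrite both sides in a more tractable form. By translation I may assume $\|f\|_\infty=f(0)$, and the layer-cake identity $\min\{a,b\}=\int_0^\infty \chi_{\{a\geq s\}}\chi_{\{b\geq s\}}\,ds$ combined with the substitution $s=e^{-t}\|f\|_\infty$ converts the left-hand side of \eqref{eq:FunctionalZhang} into
\[
\int_{\R^n}\!\int_{\R^n}\min\{f(x),f(y)\}\,dx\,dy=\|f\|_\infty\!\int_0^\infty |K_t(f)|^2 e^{-t}\,dt,
\]
while the right-hand side can be expanded through polar coordinates as
\[
|\Pi^*(f)|=\frac{1}{n}\int_{S^{n-1}}\!\!\left(2\|f\|_\infty\!\int_0^\infty |P_{u^\perp}K_t(f)|\,e^{-t}\,dt\right)^{-n}\!du,
\]
using the formula for $\|u\|_{\Pi^*(f)}$ from the introduction. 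The plan is then to prove a direction-by-direction inequality in $u\in S^{n-1}$ that, once integrated, reproduces~\eqref{eq:FunctionalZhang}.

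The core step is the following. For each $u\in S^{n-1}$ I would take the concave function $h_u:L\to[0,\infty)$ given by
\[
h_u(x,t):=h_{K_t(f)}(u)-\langle u,x\rangle,
\]
the distance from $x$ to the supporting hyperplane of $K_t(f)$ with outward normal $u$, measured along $u$. Linearity in $x$ together with the concavity of $t\mapsto h_{K_t(f)}(u)$ (which follows from convexity of $-\log(f/\|f\|_\infty)$, giving $K_{\lambda t_1+(1-\lambda)t_2}\supseteq \lambda K_{t_1}+(1-\lambda)K_{t_2}$ and hence additivity of support functions) ensures that $h_u$ is concave on $L$, so Theorem~\ref{thm:FuncBerwald>-1} applies. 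Comparing the Berwald ratio at a finite value of $p$ (for instance $p=n$, where $\Gamma(1+n)=n!$ supplies the combinatorial constant and a Fubini slicing perpendicular to $u$ expresses $\int_L h_u^n e^{-t}\,dx\,dt$ as a manageable geometric average of level sets of $f$) with the singular limit $p\to -1^+$ should isolate $|P_{u^\perp}K_t(f)|$: the simple zero of $\Gamma(1+p)^{-1}$ at $p=-1$ cancels the boundary divergence of $\int_L h_u^p e^{-t}\,dx\,dt$ near the face $\{\langle u,x\rangle=h_{K_t(f)}(u)\}$, and the residue should give precisely the projection of $K_t(f)$ onto $u^\perp$.

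Integrating the resulting direction-by-direction inequality against $du$ on $S^{n-1}$ via the polar formula for $|\Pi^*(f)|$ above, and collecting constants ($n!$ from $\Gamma(1+n)$, and $2^n$ from the factor of $2$ in the definition of $\|\cdot\|_{\Pi^*(f)}$), yields~\eqref{eq:FunctionalZhang}. The \emph{main obstacle} will be the careful asymptotic analysis of the $0\cdot\infty$ limit of the Berwald ratio as $p\to -1^+$, the functional counterpart of \cite[Theorem~5.1]{GZ}, in order to extract exactly $|P_{u^\perp}K_t(f)|$ with the correct constants. The equality case should then follow from the rigidity of Theorem~\ref{thm:FuncBerwald>-1}, which forces $h_u$ to be affine on $L$ for almost every $u$, and hence forces $L$ to be a cone with apex at the origin, i.e.\ $f/\|f\|_\infty=e^{-\|\cdot\|_{\Delta_n}}$ for some simplex $\Delta_n$ containing the origin.
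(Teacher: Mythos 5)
Your high-level strategy — apply Theorem~\ref{thm:FuncBerwald>-1} to a direction-dependent concave function on $L$, compare the value of the Berwald ratio at $p=n$ with its singular limit as $p\to-1^+$, and integrate (or take volumes) over $u\in S^{n-1}$ — is precisely the route the paper takes, and your rewriting of the two sides is correct. However, the proposal contains one decisive error and one unaddressed gap.

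\textbf{The choice of $h_u$ is wrong.} You take $h_u(x,t)=h_{K_t(f)}(u)-\langle u,x\rangle$, the distance from $x$ to the supporting hyperplane in direction $u$. With this choice the $p\to-1^+$ limit does \emph{not} produce $|P_{u^\perp}K_t(f)|$. Writing $x=y+su$ with $y\in u^\perp$ and $s\in[a(y),b(y)]$ the extent of the chord through $y$, one has
\[
\int_{K_t(f)}h_u^p\,dx=\int_{P_{u^\perp}K_t(f)}\frac{(h_{K_t(f)}(u)-a(y))^{p+1}-(h_{K_t(f)}(u)-b(y))^{p+1}}{p+1}\,dy,
\]
and since $b(y)<h_{K_t(f)}(u)$ for a.e.\ $y$ (equality only on the extreme face of $K_t(f)$), the numerator tends to $0$ as $p\to-1^+$; after multiplying by $\Gamma(1+p)^{-1}\sim p+1$ the whole expression tends to $0$, not to $|P_{u^\perp}K_t(f)|$. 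What the residue actually sees is the $(n-1)$-volume of the face of $K_t(f)$ with outer normal $u$, which is generically zero. The function the paper uses is instead the \emph{chord length from $x$ in direction $u$},
\[
h(x,t)=\bigl|K_t(f)\cap\{(x,t)+\lambda u:\lambda\ge 0\}\bigr|,
\]
which is concave on $L$ for the same reason as yours. With this choice the slicing gives $\int_{K_t(f)}h^p\,dx=\frac{1}{p+1}\int_{P_{u^\perp}K_t(f)}\ell(y)^{p+1}\,dy$ with $\ell(y)=|K_t(f)\cap(y+\langle u\rangle)|$, and the $\frac{1}{p+1}$ cancels exactly against $\Gamma(1+p)^{-1}$, so the $p\to-1^+$ limit is $\int_0^\infty e^{-t}|P_{u^\perp}K_t(f)|\,dt$, which is (up to constants) $\|u\|_{\Pi^*(f)}$, as needed.

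\textbf{The endpoint $p=n$ must be identified, not hand-waved.} Your plan says the $p=n$ endpoint is a ``manageable geometric average,'' but the crux of the paper's argument is that this endpoint is exactly $\rho_{\widetilde{K}_n(g_f)}^n(u)/n!$, where $\widetilde{K}_n(g_f)$ is Ball's body built from the covariogram $g_f(x)=\int_0^\infty e^{-t}|K_t(f)\cap(x+K_t(f))|\,dt$ (Lemma~\ref{rhoK_g}). The resulting direction-by-direction inequality is then an inclusion of bodies, $\widetilde{K}_n(g_f)\subseteq 2(n!)^{1/n}\|f\|_1\,\Pi^*(f)$; taking Lebesgue measure and using Ball's volume formula (Lemma~\ref{VolumeAndIntegral}) together with the identities for $g_f(0)$ and $\int g_f$ in Lemma~\ref{lem:g(x)} converts $|\widetilde{K}_n(g_f)|$ into $\frac{1}{\|f\|_1}\int\!\int\min\{f(x),f(y)\}\,dx\,dy$, giving \eqref{eq:FunctionalZhang} with the stated constants. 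Without making this identification the constants $2^n n!$ and the exponent $n+1$ on $\|f\|_1$ cannot be recovered. Once $h$ is replaced by the chord-length function and the $p=n$ endpoint is identified as above, the rest of your outline, including the sketch of the equality case via the rigidity in Lemma~\ref{1dim}, goes through.
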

Observe that when $f=e^{-\Vert\cdot\Vert_K}$ for some convex body $K\in\mathcal{K}_0^n$, then \eqref{eq:FunctionalZhang} recovers Zhang's inequality.

Our second goal here is to provide a new proof of the functional version of Zhang's inequality \eqref{eq:FunctionalZhang} by using the extension of Berwald's inequality given by Theorem \ref{thm:FuncBerwald>-1}, in a similar way as Gardner and Zhang  \cite{GZ} proved the geometrical version of Zhang's inequality via their extension of Berwald's inequality \eqref{thm:GeometricBerwald} to $p>-1$.

A common feature in both proofs, the one given in \cite{ABG} and the one in this paper, is the crucial role played by the functional form of the covariogram function $g_f$ associated to the function  $f\in\mathcal{F}(\R^n)$.  See \cite{ABG} and its definition below. Recall that in the geometric setting the covariogram function of a convex body $K$ is given by $g_K(x)=|K\cap (x+K)|$. Apart from this fact, the two proofs completely differ.

We introduce further notation: $S^{n-1}$ denotes the Euclidean unit sphere in $\R^n$.
If the origin is in the interior of a convex body $K$, the function $\rho_K\colon S^{n-1}\to [0,+\infty)$ given by $\rho_K(u)=\sup\{\lambda\ge 0\mid \lambda u\in K\}$ is the \textit{radial function} of $K$. It extends to $\R^{n}\setminus\{0\}$ via $t\rho_K(tu)=\rho_K(u)$, for any $t>0,u\in S^{n-1}$.

Finally, for any function $f\in\mathcal{F}(\R^n)$ let $g_f$ be the covariogram functional of $f$, is defined by
$$
g_f(x):=\int_0^\infty e^{-t}|K_t(f)\cap(x+ K_t(f))|dt
$$
(cf.~\cite{ABG}).

The paper is organized as follows: Section \ref{sec:Berwald} contains the aforementioned extension, Theorem \ref{thm:FuncBerwald>-1},
of the functional Berwald inequality to the larger range of values of $p>-1$.
In Section \ref{sec:Proof} we recall the celebrated family (with parameter $p>0$) of convex bodies associated to any log-concave function introduced by Ball in \cite[pg.~74]{B}. We also recall the properties of the covariogram functional of a log-concave function, proven in \cite{ABG}.
Another main ingredient in the proof in \cite{GZ} is an expression that connects the covariogram function of a convex body $K$ and Ball's convex bodies. Such a connection can be extended to the functional form of the covariogram $g_f$ of a log-concave function and moreover, the polar projection body of $f$ will appear as a limiting case of this new expression when the value of the parameter $p$ tends to $-1$.

\section{An extension of Berwald's inequality}\label{sec:Berwald}

In this section we will prove the aforementioned extension of Berwald's inequality, see Theorem \ref{thm:FuncBerwald>-1} above. We first state a 1-dimensional lemma that can be seen as a degenerate version of Theorem \ref{thm:FuncBerwald>-1}.

\begin{lemma}\label{1dim}
	Let $\gamma:[0,\infty)\to[0,\infty)$ be a non-decreasing concave function and define
	$$
	\Phi_\gamma(p)=\left(\frac{1}{\Gamma(1+p)}\int_0^\infty \gamma(r)^p e^{-r}dr\right)^\frac{1}{p}, \, p>-1.
	$$
Then $\Phi_\gamma(p)$ is decreasing in $p$ in $(-1,\infty)$. Furthermore, if there exist $-1<p_1<p_2$ such that $\Phi_\gamma(p_1)=\Phi_\gamma(p_2)$, then $\gamma$ is a linear function and $\Phi_\gamma$ is constant on $(-1,\infty)$.
	\end{lemma}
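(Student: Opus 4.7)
The strategy is to compare $\gamma$ with the linear function $\ell(r)=ar$, where $a$ is chosen so that $\Phi_\ell$ matches $\Phi_\gamma$ at a prescribed exponent. Fix $p_1\in(-1,\infty)$ and set $a:=\Phi_\gamma(p_1)$. Since $\int_0^\infty r^p e^{-r}\,dr=\Gamma(1+p)$ for every $p>-1$, a direct computation gives $\Phi_\ell(p)=a$ identically in $p$. Thus the monotonicity of $\Phi_\gamma$ reduces to showing $\Phi_\gamma(p)\leq a$ for $p>p_1$ and $\Phi_\gamma(p)\geq a$ for $p<p_1$.

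The key geometric observation is a single-crossing property. Since $h:=\gamma-\ell$ is concave with $h(0)=\gamma(0)\geq 0$, a short concavity argument shows that, unless $\gamma\equiv\ell$, there exists $r_0\in(0,\infty)$ with $\gamma\geq\ell$ on $[0,r_0]$ and $\gamma\leq\ell$ on $[r_0,\infty)$ (the moment-matching identity $\int\gamma^{p_1}e^{-r}dr=\int\ell^{p_1}e^{-r}dr$ rules out a strict pointwise inequality in either direction). Writing $d\mu=e^{-r}dr$, this single-crossing of $\gamma$ and $\ell$ translates to a single-crossing of the level-set difference $\delta(t):=\mu(\{\gamma\geq t\})-\mu(\{\ell\geq t\})$: with $t_0:=ar_0$, we have $\delta\geq 0$ on $[0,t_0]$ and $\delta\leq 0$ on $[t_0,\infty)$. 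A Cavalieri-type computation, performed separately for $p>0$ and $-1<p<0$ and then unified, yields
\begin{equation*}
\int_0^\infty \gamma(r)^p e^{-r}dr-\int_0^\infty \ell(r)^p e^{-r}dr \;=\; p\int_0^\infty t^{p-1}\delta(t)\,dt,\qquad p\in(-1,\infty)\setminus\{0\},
\end{equation*}
with integrability at $t=0$ secured by $\delta(t)=O(t)$ as $t\to 0^+$ (from the expansions of $e^{-\gamma^{-1}(t)}$ and $e^{-t/a}$). The choice of $a$ then gives $\int_0^\infty t^{p_1-1}\delta(t)\,dt=0$.

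For $p\neq p_1$, using the vanishing just noted, we rewrite
\begin{equation*}
\int_0^\infty t^{p-1}\delta(t)\,dt \;=\; \int_0^\infty t^{p_1-1}\bigl(t^{p-p_1}-t_0^{p-p_1}\bigr)\delta(t)\,dt,
\end{equation*}
and observe that the factor $t^{p-p_1}-t_0^{p-p_1}$ is opposite in sign to $\delta(t)$ at every $t$ when $p>p_1$ (as $t\mapsto t^{p-p_1}$ is then increasing), and of the same sign when $p<p_1$. Consequently $\int_0^\infty t^{p-1}\delta(t)\,dt\leq 0$ for $p>p_1$ and $\geq 0$ for $p<p_1$. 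Tracking the sign of the factor $p$ in the preceding displayed identity, and the reversal of inequalities when extracting the $(1/p)$-th root for $p<0$, one obtains $\Phi_\gamma(p)\leq a$ for $p>p_1$ and $\Phi_\gamma(p)\geq a$ for $p<p_1$, which proves the monotonicity. For the equality case, if $\Phi_\gamma(p_1)=\Phi_\gamma(p_2)$ with $p_1<p_2$, the integrand of the comparison identity applied with $p=p_2$ is non-positive pointwise and integrates to zero; since $t^{p_2-p_1}-t_0^{p_2-p_1}$ vanishes only at $t=t_0$, this forces $\delta\equiv 0$, hence $\gamma=\ell$ and $\Phi_\gamma$ is constant on $(-1,\infty)$. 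The main obstacle is the uniform treatment across $p\in(-1,0)$ and $p\in(0,\infty)$: the Cavalieri representation takes slightly different forms in these regimes, and the signs of $p$ and $p-p_1$, together with the sign reversal of $(\cdot)^{1/p}$ for $p<0$, must be tracked consistently.
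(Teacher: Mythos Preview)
Your proof is correct and shares the paper's opening move---compare $\gamma$ with the linear function $\ell(r)=\Phi_\gamma(p_1)\,r$, note that $\Phi_\ell\equiv\Phi_\gamma(p_1)$, and exploit the single crossing of $\gamma$ and $\ell$ forced by concavity together with the moment match at $p_1$---but the decisive comparison step is carried out along a different route. The paper stays on the function side: it writes
\[
\int_0^\infty(\gamma^{p_2}-\ell^{p_2})e^{-r}\,dr=\int_0^\infty(\gamma^{p_1}-\ell^{p_1})\,\psi\,e^{-r}\,dr,\qquad \psi=\frac{\gamma^{p_2}-\ell^{p_2}}{\gamma^{p_1}-\ell^{p_1}},
\]
shows that $\psi$ is strictly monotone via the strict concavity or convexity of $x\mapsto x^{p_2/p_1}$, and finishes with the mean value theorem; this requires treating the ranges $-1<p_1<p_2<0$ and $0<p_1<p_2$ separately and then bridging them by continuity of $\Phi_\gamma$ at $0$. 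You instead pass to distribution functions via the layer-cake formula, obtain the unified identity $\int(\gamma^p-\ell^p)\,d\mu=p\int_0^\infty t^{p-1}\delta(t)\,dt$, and use the single crossing of $\delta$ together with the factorization $t^{p-1}=t^{p_1-1}\cdot t^{p-p_1}$ and the vanishing of $\int t^{p_1-1}\delta$ to read off the sign directly. Your argument handles all pairs $-1<p_1<p_2$ (with $p_1,p_2\neq 0$) in one stroke once the Cavalieri identity is in place, and makes the equality case immediate, at the price of the extra passage to level sets and the check that $\delta(t)=O(t)$ near $0$; the paper's argument avoids that detour but needs the case split and the monotonicity of the difference quotient of $x^{p_2/p_1}$.
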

	
\begin{rmk}
As usual, we define $\Phi_\gamma(0)=\lim_{p\to 0}\Phi_\gamma(p)$ which by straightforward computations
(using L'H$\hat{\text{o}}$pital's rule, interchanging the integral and the derivative operations,
and taking into account that $\frac{\partial\Gamma(1+x)}{\partial x}|_{x=0}=-A$, where $A\approx 0.577$ is the Euler-Mascheroni constant) yields
$\Phi_\gamma(0)=e^{A} \exp\Big(\int_{0}^{\infty}\log \gamma(r) e^{-r}dr\Big)$. 
\end{rmk}

\begin{proof}[Proof of Lemma \ref{1dim}]
Fix $0\ne p_1>-1$ and
write  $\overline{\gamma}(r)=\Phi_{\gamma}(p_1)\cdot r,\ r\ge 0$.
For any $p>-1,$
	$$
	\Phi_{\overline{\gamma}}(p)=\left(\frac{1}{\Gamma(1+p)}\int_0^\infty \Phi_{\gamma}(p_1)^p r^p e^{-r}dr\right)^\frac{1}{p}=\Phi_{\gamma}(p_1).
	$$
	Therefore
	\begin{equation}\label{eq:IntegralEquals0}
	0=\Phi_{\gamma}^{p_1}(p_1)-\Phi_{\overline{\gamma}}^{p_1}(p_1)=\frac{1}{\Gamma(1+p_1)}\int_0^\infty(\gamma(r)^{p_1}-\overline{\gamma}(r)^{p_1})e^{-r}dr,
	\end{equation}
	or equivalently,
	$$
	\int_0^1(\gamma(-\log t)^{p_1}-\overline{\gamma}(-\log t)^{p_1})dt=0.$$
	
	We first consider the case $-1<p_1<p_2<0$.

	Since the function $\gamma$ is non-negative and concave and \eqref{eq:IntegralEquals0} holds, if $\gamma$ is not identically equal to $\overline{\gamma}$, i.e., $\gamma$ is not linear, there exists a unique $r_0\in(0,\infty)$ such that $\gamma(r)>\overline{\gamma}(r)$ if $r\in(0,r_0)$ and $\gamma(r)<\overline{\gamma}(r)$ if $r\in(r_0,\infty)$. Denoting $t_0=e^{-r_0}$, we have that $\gamma(-\log t)<\overline{\gamma}(-\log t)$ if $t\in(0,t_0)$ and $\gamma(-\log t)>\overline{\gamma}(-\log t)$ if $t\in(t_0,1)$.	Now,
	\begin{eqnarray*}
		\Gamma(1+p_2)(\Phi_{\gamma}^{p_2}(p_2)-\Phi_{\overline{\gamma}}^{p_2}(p_2))&=&\int_0^\infty(\gamma(r)^{p_2}-\overline{\gamma}(r)^{p_2})e^{-r}dr\cr
		&=&\int_0^1(\gamma(-\log t)^{p_2}-\overline{\gamma}(-\log t)^{p_2})dt\cr
		&=&\int_0^1(\gamma(-\log t)^{p_1}-\overline{\gamma}(-\log t)^{p_1})\psi(t)dt,\cr
	\end{eqnarray*}
	where
	$$
	\psi(t)=\frac{\gamma(-\log t)^{p_2}-\overline{\gamma}(-\log t)^{p_2}}{\gamma(-\log t)^{p_1}-\overline{\gamma}(-\log t)^{p_1}}.
	$$
Since $w(x)=x^\frac{p_2}{p_1}$ is strictly concave in $(0,\infty)$,
	$\displaystyle
	\frac{w(x)-w(y)}{x-y}
	$
	is strictly decreasing in $x$ and $y$ and, since  $\gamma(-\log t)^{p_1}$ is non-decreasing and $\overline{\gamma}(-\log t)^{p_1}$ is strictly increasing in $t$, $\psi(t)$ is strictly decreasing. Now, by the mean value theorem, there exist $c_1\in(0, t_0)$ and $c_2\in(t_0,1)$ such that
	\begin{eqnarray*}
		&&\!\!\!\!\int_0^1(\gamma(-\log t)^{p_1}-\overline{\gamma}(-\log t)^{p_1})\psi(t)dt\cr
		&=&\!\!\!\!\int_0^{t_0}(\gamma(-\log t)^{p_1}-\overline{\gamma}(-\log t)^{p_1})\psi(t)dt+\int_{t_0}^1(\gamma(-\log t)^{p_1}-\overline{\gamma}(-\log t)^{p_1})\psi(t)dt\cr
		&=&\!\!\!\!\psi(c_1)\!\int_0^{t_0}\!\!(\gamma(-\log t)^{p_1}\!-\overline{\gamma}(-\log t)^{p_1})dt
		+\psi(c_2)\!\int_{t_0}^1\!(\gamma(-\log t)^{p_1}\!-\overline{\gamma}(-\log t)^{p_1})dt\cr
		&=&\!\!\!\!(\psi(c_1)-\psi(c_2))\int_0^{t_0}(\gamma(-\log t)^{p_1}-\overline{\gamma}(-\log t)^{p_1})dt>0 ,\cr
	\end{eqnarray*}
	since $\psi$ is strictly decreasing, $\gamma(-\log t)<\overline{\gamma}(-\log t)$ for $t\in(0,t_0)$ and $p_1<0$. Therefore, if $\gamma$ is not linear, $\Phi_{\gamma}(p_2)<\Phi_{\overline{\gamma}}(p_2)=\Phi_{\overline{\gamma}}(p_1)=\Phi_{\gamma}(p_1)$.
	
	The case $0<p_1<p_2$ follows analogously with straightforward changes (in this case, if $\gamma$ is not linear $w$ is strictly convex and $\psi$ is strictly decreasing). The continuity of $\Phi_{\gamma}$ in $0$ then implies that $\Phi_{\gamma}(p)$ is decreasing in $p>-1$.

    If $\Phi_{\gamma}(p_1)=\Phi_{\gamma}(p_2)$ for some $-1<p_1<p_2$, since $\Phi_{\gamma}(p)$ would not be strictly decreasing in $[p_1,p_2]$,
    then $\gamma$ would be linear, thus concluding the case of equality.
\end{proof}

Our next result is the aforementioned extension of \cite[Lemma 3.3]{AAGJV} to $p\in(-1,\infty)$.

\begin{proof}[Proof of Theorem \ref{thm:FuncBerwald>-1}]
Consider the probability measure on $\R^{n+1}$ given by $\displaystyle d\mu(x,t):=\frac{e^{-t}\chi_L(x,t)}{\int_L e^{-t}dtdx}dtdx$. Denote
$C_s(h)=\{(x,t)\in L\,:\,h(x,t)\geq s\}$
and define the function $I_h:[0,\infty)\to[0,\infty)$ as
$$
I_h(s):=\frac{1}{\int_L e^{-t}dtdx}\int_{C_s(h)}e^{-t}dtdx=\mu({C_s(h)}).
$$

$I_h$ is non-increasing,
$\displaystyle
I_h(0)=\mu(L)=1$ and since $h$ is concave, $I_h$ is log-concave (see \cite[Lemma 3.2]{AAGJV}).

 Observe that $\displaystyle(x,t)\in L$ if and only if $x\in K_t(f)$, which happens if and only if $\rho_{K_t(f)}(x)\ge 1$, and that, by Fubini's theorem, $\displaystyle\int_L\! e^{-t}dtdx=\!\!\int_0^\infty\!\!\! e^{-t}|K_t(f)|{dt}$. Now define $h_1:L\to[0,\infty)$ as
$$
h_1(x,t):=\sup\left\{s\in[0,\infty)\,:\, I_h(s)>\frac{1}{\rho^n_{K_t(f)}(x)}\right\}.
$$
$h_1$ has two important properties:

- $h$ and $h_1$ are equally distributed with respect to $\mu$, that is $I_{h_1}\equiv I_h$. In order to prove this, notice that for every $s\ge 0$, and every $(x,t)\in L$, we have that $h_1(x,t)>s$ if and only if $\rho^n_{K_t(f)}(x)>\frac{1}{I_h(s)}$ and so by Fubini's theorem,
$$
I_{h_1}(s)=\int_{C_s(h_1)}d\mu(x,t)=\int_0^\infty e^{-t}|K_t(f)|I_h(s)\frac{dt}{{\int_L e^{-t}dtdx}}
=I_h(s).
$$

- $h_1(r\rho_{K_t(f)}(u)u,t)$ does not depend on $t$ and $u$ since for any $r,t>0$, $u\in S^{n-1}$,
$$
h_1(r\rho_{K_t(f)}(u)u,t)=\sup\left\{s\in[0,\infty)\,:\, I_h(s)>r^n\right\}:=\gamma(r).
$$
Therefore, for any $p>0$,
\begin{eqnarray*}
\int_L h^p(x,t)d\mu(x,t)&=&\int_{\R^n}\int_0^\infty\chi_{\{h^p(x,t)\geq r\}}dr d\mu(x,t)
=\int_0^\infty I_h(r^\frac{1}{p})dr
\cr
&=&\int_0^\infty I_{h_1}(r^\frac{1}{p})dr=\int_L h_1^p(x,t)d\mu(x,t).\end{eqnarray*}

By Fubini's theorem and integrating in polar coordinates,
\begin{eqnarray*}
\int_L h_1^p(x,t)e^{-t}dxdt&=&\int_0^\infty e^{-t}\int_{K_t(f)}h_1^p(x,t)dxdt\cr &=&{n|B_2^n|}\int_0^\infty e^{-t}\int_{S^{n-1}}\int_0^{\rho_{K_t(f)}(u)}h_1^p(ru,t)r^{n-1}drd\sigma(u)dt\cr
&=&{n|B_2^n|}\int_0^\infty e^{-t}\int_{S^{n-1}}\int_0^{1}\gamma^p(r)\rho_{K_t(f)}^n(u)r^{n-1}drd\sigma(u)dt\cr
&=&{n}\int_0^\infty e^{-t}|K_t(f)|\int_0^{1}\gamma^p(r)r^{n-1}drdt
\end{eqnarray*}
and so,  since $ \int_0^\infty e^{-t}|K_t(f)|dt=\int_L e^{-t}dtdx$,
$$ \int_L h^p(x,t)d\mu(x,t)=n\int_0^1\gamma^p(r)r^{n-1}dr.$$

If $p<0$ the same equality holds. Indeed, we have
\begin{eqnarray*}
\int_L h^p(x,t)d\mu(x,t)&=&\int_0^\infty\int_{\R^n}\chi_{\{h(x,t)\leq r^\frac{1}{p}\}}drd\mu(x,t)=\int_0^\infty(1- I_h(r^\frac{1}{p}))dr\cr
&=&\int_0^\infty(1- I_{h_1}(r^\frac{1}{p}))dr=\int_L h_1^p(x,t)d\mu(x,t)
\end{eqnarray*} and we proceed as before.
If $p=0$ the equality is obviously true.

 Notice that since $I_h$ is log-concave the function $\gamma$ is non-increasing and for every $r_1,r_2\in[0,1]$,
$$
\gamma(r_1^{1-\lambda}r_2^\lambda)\geq(1-\lambda)\gamma(r_1)+\lambda \gamma(r_2).$$ If we denote  $\gamma_1(r)=\gamma(e^{-r/n})$ the previous statement means that $\gamma_1$ is non-decreasing and concave in $[0,\infty)$ and we have
$$
\int_L h^p(x,t)d\mu(x,t)=n\int_0^1\gamma^p(r)r^{n-1}dr=\int_0^\infty \gamma_1^p(r) e^{-r}dr.
$$

We can apply now Lemma \ref{1dim} to the function $\gamma_1$ and conclude that
$$
\left(\frac{1}{\Gamma(1+p)\int_L e^{-t}dtdx}\int_L h^p(x,t)e^{-t}dtdx\right)^\frac{1}{p}
$$
is non-decreasing in $(-1,\infty)$.
\end{proof}

\section{Proof of functional Zhang's inequality}\label{sec:Proof}

In this section we will give the proof of the functional version of Zhang's inequality \eqref{eq:FunctionalZhang}. For any $g\in\mathcal F(\mathbb R^n)$
such that $g(0)>0$ and $p>0$, we will consider the following important family of convex bodies, which was introduced  by K. Ball in \cite[pg.~74]{B}. We denote
$$
\widetilde{K}_p(g):=\left\{x\in\R^n\,:\, \int_0^\infty g(rx)r^{p-1}dr\geq\frac{g(0)}{p}\right\}.
$$
It follows from the definition that the radial function of $\widetilde{K}_p(g)$ is given by
$$
\rho^p_{\widetilde{K}_p(g)}(u)=\frac{1}{g(0)}\int_0^\infty pr^{p-1}g(rx)dr.
$$

\begin{rmk}\label{rmk:InclusionBallsBodies}
	It is well known (cf.~\cite[Proposition 2.5.7]{BGVV}) that for any $g\in\mathcal F(\mathbb R^n)$ such that $\Vert g\Vert_\infty=g(0)$ and $0<p\leq q$,
	$$
	\frac{\Gamma(1+p)^\frac{1}{p}}{\Gamma(1+q)^\frac{1}{q}}\widetilde{K}_q(g)\subseteq \widetilde{K}_p(g)\subseteq \widetilde{K}_q(g).
	$$
\end{rmk}

We will make use of the following well known relation (cf.~\cite{B}) between the Lebesgue measure of $\widetilde{K}_n(g)$ and the integral of $g$.

\begin{lemma}[\cite{B}]\label{VolumeAndIntegral}
	Let $g\in\mathcal F(\mathbb R^n)$ be such that $g(0)>0$. Then
	$$
	|\widetilde{K}_n(g)|=\frac{1}{g(0)}\int_{\R^n}g(x)dx.
	$$
\end{lemma}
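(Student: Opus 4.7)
The plan is to compute $|\widetilde{K}_n(g)|$ directly via integration in polar coordinates and match the resulting expression to the integral of $g$ over $\R^n$.

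First I would express the Lebesgue measure of $\widetilde{K}_n(g)$ through its radial function: by polar coordinates,
$$
|\widetilde{K}_n(g)|=\int_{S^{n-1}}\int_0^{\rho_{\widetilde{K}_n(g)}(u)}r^{n-1}\,dr\,d\sigma(u)=\frac{1}{n}\int_{S^{n-1}}\rho_{\widetilde{K}_n(g)}^n(u)\,d\sigma(u).
$$
Next I would use the identity for the radial function of Ball's bodies (recorded just above the lemma), specialized at $p=n$, namely
$$
\rho^n_{\widetilde{K}_n(g)}(u)=\frac{n}{g(0)}\int_0^\infty r^{n-1}g(ru)\,dr,
$$
which is immediate from the definition of $\widetilde{K}_n(g)$ since its radial function is the largest $\rho>0$ for which $\int_0^\infty g(ru)r^{n-1}\,dr\ge g(0)/n$ still holds after the obvious homogeneity rescaling $r\mapsto r/\rho$.

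Substituting into the polar-coordinate expression for the volume and swapping the constants gives
$$
|\widetilde{K}_n(g)|=\frac{1}{g(0)}\int_{S^{n-1}}\int_0^\infty r^{n-1}g(ru)\,dr\,d\sigma(u),
$$
and the inner double integral is precisely $\int_{\R^n}g(x)\,dx$ written in polar coordinates, which yields the claimed identity.

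The argument is essentially a double application of polar coordinates, so I do not anticipate a serious obstacle; the only point worth checking is that the interchange of the order of integration is justified, which follows from Tonelli's theorem since $g\ge 0$ and $g\in\mathcal F(\R^n)$ is integrable (this also ensures that the radial function is finite $\sigma$-almost everywhere on $S^{n-1}$, so that $\widetilde{K}_n(g)$ is a well-defined bounded set when $\int g<\infty$).
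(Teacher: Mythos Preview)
The paper does not give its own proof of this lemma; it is quoted as a well-known fact due to Ball and is only cited. Your argument---computing the volume via the radial function in polar coordinates, inserting the formula $\rho^n_{\widetilde{K}_n(g)}(u)=\frac{n}{g(0)}\int_0^\infty r^{n-1}g(ru)\,dr$ that the paper records just before the lemma, and then recognizing the resulting spherical integral as $\int_{\R^n}g$---is exactly the standard proof, and it is correct (with Tonelli justifying the order of integration, as you note).
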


For any $f\in\mathcal F(\mathbb R^n)$, we collect below the properties of its covariogram functional $g_f$, whose proof can be found in \cite[Lemma 2.1]{ABG}.
\begin{lemma}\label{lem:g(x)}
Let $f\in\mathcal F(\mathbb R^n)$. Then the function $g_f:\R^n\to\R$ defined by
$$
g_f(x)=\int_{\R^n}\min\left\{\frac{f(y)}{\Vert f\Vert_\infty},\frac{f(y-x)}{\Vert f\Vert_\infty}\right\}dy
$$
is even, log-concave, $0\in\mathrm{int}(\mathrm{supp}\ g_f)$ with $\displaystyle \Vert g_f\Vert_\infty=g_f(0)=\int_0^\infty e^{-t}|K_t(f)|dt=\int_{\R^n}\frac{f(x)}{\Vert f\Vert_\infty}dx>0$, and $\displaystyle{\int_{\R^n}g_f(x)dx=\int_{\R^n}\int_{\R^n}\min\left\{\frac{f(y)}{\Vert f\Vert_\infty},\frac{f(x)}{\Vert f\Vert_\infty}\right\}dydx}$.
\end{lemma}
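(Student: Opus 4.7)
The plan is to verify each of the claimed properties of $g_f$ in turn. Four of them are routine manipulations (substitution, Fubini, layer-cake, translation continuity in $L^1$), and the only nontrivial input is Prékopa–Leindler, which delivers log-concavity.

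For evenness, the substitution $z = y - x$ in the defining integral gives
\[
g_f(-x) = \int_{\R^n} \min\Big\{\frac{f(y)}{\Vert f\Vert_\infty}, \frac{f(y+x)}{\Vert f\Vert_\infty}\Big\} dy = \int_{\R^n} \min\Big\{\frac{f(z+x)}{\Vert f\Vert_\infty}, \frac{f(z)}{\Vert f\Vert_\infty}\Big\} dz = g_f(x).
\]
Setting $x = 0$ yields $g_f(0) = \int_{\R^n} f(y)/\Vert f\Vert_\infty\, dy$, which is strictly positive since $f \not\equiv 0$. As the integrand $f/\Vert f\Vert_\infty$ takes values in $[0,1]$, the layer-cake representation together with the change of variable $s = e^{-t}$ gives $g_f(0) = \int_0^\infty e^{-t}|K_t(f)|\, dt$. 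The pointwise bound $\min\{a,b\} \leq a$ then shows $g_f(x) \leq g_f(0)$ for every $x$, so $\Vert g_f\Vert_\infty = g_f(0)$.

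For log-concavity, I would lift to $\R^{2n}$ and set $F(x,y) := \min\{f(y), f(y-x)\}/\Vert f\Vert_\infty$. The function $(x,y) \mapsto f(y)/\Vert f\Vert_\infty$ is log-concave since $f$ is, and $(x,y) \mapsto f(y-x)/\Vert f\Vert_\infty$ is log-concave as the composition of a log-concave function with the linear map $(x,y)\mapsto y-x$. The pointwise minimum of two log-concave functions is log-concave (its super-level sets are intersections of convex sets), and Prékopa–Leindler applied to the $y$-marginal yields log-concavity of $g_f(x) = \int_{\R^n} F(x,y)\, dy$.

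For the remaining two claims, continuity of $g_f$ follows from the estimate $|\min\{a,b_1\} - \min\{a,b_2\}| \leq |b_1-b_2|$ and continuity of translation in $L^1$:
\[
|g_f(x) - g_f(x_0)| \leq \frac{1}{\Vert f\Vert_\infty}\int_{\R^n} |f(y-x) - f(y-x_0)|\, dy \longrightarrow 0 \quad \text{as } x \to x_0.
\]
Combined with $g_f(0) > 0$, this places a neighborhood of $0$ inside $\supp g_f$, giving $0 \in \mathrm{int}(\supp g_f)$. Finally, Fubini's theorem followed by the substitution $z = y - x$ in the inner integral produces
\[
\int_{\R^n}\! g_f(x)\, dx = \int_{\R^n}\!\!\int_{\R^n} \min\Big\{\frac{f(y)}{\Vert f\Vert_\infty}, \frac{f(y-x)}{\Vert f\Vert_\infty}\Big\}\, dy\, dx = \int_{\R^n}\!\!\int_{\R^n} \min\Big\{\frac{f(y)}{\Vert f\Vert_\infty}, \frac{f(z)}{\Vert f\Vert_\infty}\Big\}\, dz\, dy,
\]
which is the stated identity. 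There is no real obstacle: the only non-elementary step is the invocation of Prékopa–Leindler for log-concavity, and all other items are unwindings of the definition.
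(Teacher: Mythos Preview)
Your proof is correct and complete. Note that the paper itself does not supply a proof of this lemma: it merely states the properties and refers the reader to \cite[Lemma 2.1]{ABG}. Your argument is the standard one and is entirely self-contained.

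One minor remark: in your justification that the pointwise minimum of two log-concave functions is log-concave, the parenthetical ``its super-level sets are intersections of convex sets'' only establishes quasi-concavity, not log-concavity. The claim is nonetheless true, and the direct verification is immediate: if $f_1,f_2$ are log-concave then for $i=1,2$,
\[
f_i\big((1-\lambda)x+\lambda y\big)\ge f_i(x)^{1-\lambda}f_i(y)^{\lambda}\ge \min\{f_1,f_2\}(x)^{1-\lambda}\min\{f_1,f_2\}(y)^{\lambda},
\]
and taking the minimum over $i$ gives the desired inequality. This does not affect the validity of your argument.
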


In the particular case of $g_f$ as in Lemma \ref{lem:g(x)}, we can provide an alternative definition for $\widetilde{K}_p(g_f)$ in terms of its radial function that will allow us to obtain the polar projection body of $f$ as a limiting case of this expression when $p$ tends to $-1$.

\begin{lemma}\label{rhoK_g}
	Let $f\in\mathcal F(\mathbb R^n)$ and let $g_f:\R^n\to\R$ be the function
	$$
	g_f(x)=\int_0^\infty e^{-t}|K_t(f)\cap(x+ K_t(f))|dt.
	$$
	Then, for any $u\in S^{n-1}$ and
$p>0$,
	$$
	\rho_{\widetilde{K}_p(g_f)}^p(u)=\frac{1}{(p+1)\int_{\R^n}\frac{f(x)}{\Vert f\Vert_\infty}dx}\int_0^\infty e^{-t}\int_{P_{u^\perp}K_t(f)}|K_t(f)\cap(y+\langle u\rangle)|^{p+1}dydt$$
\end{lemma}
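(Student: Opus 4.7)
My plan is to start from the radial-function description of Ball's bodies, namely
$$\rho^p_{\widetilde{K}_p(g_f)}(u)=\frac{1}{g_f(0)}\int_0^\infty p\,r^{p-1}g_f(ru)\,dr,$$
and to insert the integral definition of $g_f$. By Lemma \ref{lem:g(x)}, the prefactor is $g_f(0)=\int_{\R^n}f(x)/\Vert f\Vert_\infty\,dx$, which is exactly the normalization appearing in the claim. After an application of Fubini's theorem in $(r,t)$, the problem reduces, for each fixed $t>0$ and with $K:=K_t(f)$, to establishing the purely geometric identity
$$\int_0^\infty p\,r^{p-1}|K\cap(ru+K)|\,dr=\frac{1}{p+1}\int_{P_{u^\perp}K}|K\cap(y+\langle u\rangle)|^{p+1}\,dy.$$

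The key step is to prove this identity, which is the analogue (for convex bodies) of the expression that Gardner--Zhang use. To this end, I would slice $K$ by lines parallel to $u$: writing $\ell(y):=|K\cap(y+\langle u\rangle)|$ for $y\in P_{u^\perp}K$, each such chord is an interval of length $\ell(y)$, and the translated chord $(ru+K)\cap(y+\langle u\rangle)$ is its shift by $r$ in direction $u$. Therefore the overlap has one-dimensional length $(\ell(y)-r)_+$, and Fubini gives
$$|K\cap(ru+K)|=\int_{P_{u^\perp}K}(\ell(y)-r)_+\,dy.$$
Swapping the order of integration in $(r,y)$ and computing the elementary one-dimensional integral
$$\int_0^{\ell(y)}p\,r^{p-1}(\ell(y)-r)\,dr=\ell(y)^{p+1}-\frac{p}{p+1}\ell(y)^{p+1}=\frac{\ell(y)^{p+1}}{p+1}$$
yields the desired geometric identity.

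Plugging this back, interchanging the $r$- and $t$-integrations once more via Fubini (all integrands are non-negative, so integrability is automatic from the finiteness of $g_f(0)$), and dividing by $g_f(0)=\int_{\R^n} f/\Vert f\Vert_\infty\,dx$ gives exactly the formula in the statement. The only point that requires care is the validity of the Fubini exchanges and the fact that the chord decomposition applies to every superlevel set $K_t(f)$ uniformly in $t$; both are immediate since $K_t(f)$ is a convex body for each $t$ in the relevant range, and the integrand $p\,r^{p-1}e^{-t}|K_t(f)\cap(ru+K_t(f))|$ is non-negative.
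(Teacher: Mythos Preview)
Your proof is correct and follows essentially the same approach as the paper: start from the radial-function formula $\rho_{\widetilde{K}_p(g_f)}^p(u)=\frac{p}{g_f(0)}\int_0^\infty r^{p-1}g_f(ru)\,dr$, apply Fubini in $(r,t)$, express $|K_t(f)\cap(ru+K_t(f))|$ via the chord decomposition $\int_{P_{u^\perp}K_t(f)}(\ell(y)-r)_+\,dy$, swap $(r,y)$, and evaluate the elementary Beta-type integral. The only cosmetic difference is that the paper writes the upper limit of the $r$-integral as $\rho_{K_t(f)-K_t(f)}(u)$ before slicing, whereas you absorb this into the positive part $(\ell(y)-r)_+$; the content is identical.
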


\begin{rmk}
Notice that the right hand side in the equality above
is defined for $p>-1$ and that, since $(p+1)\Gamma(1+p)=\Gamma(2+p)$, if $p\to -1^+$ then
$$
\frac{1}{(p+1)\Gamma(1+p)\int_{\R^n}\frac{f(x)}{\Vert f\Vert_\infty}dx}\int_0^\infty e^{-t}\int_{P_{u^\perp}K_t(f)}|K_t(f)\cap(y+\langle u\rangle)|^{p+1}dt\to\frac{\Vert u\Vert_{\Pi^*(f)}}{2\Vert f\Vert_1}.
$$
\end{rmk}

\begin{proof}[Proof of Lemma \ref{rhoK_g}]
	By Lemma \ref{lem:g(x)}, $g_f(0)>0$ and
\begin{eqnarray*}
	&&\!\!\!\!\!\!\!\!\!\!\rho_{\widetilde{K}_p(g_f)}^p(u)=\cr
	&=&\frac{p}{g_f(0)}\int_0^\infty r^{p-1}g_f(ru)dr \cr
    &=&\frac{1}{g_f(0)}\int_0^\infty pr^{p-1}\int_0^\infty e^{-t}|K_t(f)\cap(ru+ K_t(f))|dtdr\cr
	&=&\frac{1}{g_f(0)}\int_0^\infty e^{-t}\int_0^{\rho_{K_t(f)-K_t(f)}(u)}\!\!\!pr^{p-1}|K_t(f)\cap(ru+ K_t(f))|drdt\cr
	&=&\frac{1}{g_f(0)}\int_0^\infty\!\!\!\! e^{-t}\int_0^{\rho_{K_t(f)-K_t(f)}(u)}\!\!\!\!pr^{p-1}\int_{P_{u^\perp}K_t}\!\!\!\!\max\{|K_t(f)\cap (y+\langle u\rangle)|-r,0\}dydrdt\cr
	&=&\frac{1}{g_f(0)}\int_0^\infty e^{-t}\int_{P_{u^\perp}K_t(f)}\int_0^{|K_t(f)\cap (y+\langle u\rangle)|}\!\!\!\!\!pr^{p-1}\left(|K_t(f)\cap (y+\langle u\rangle)|-r\right)\ drdydt\cr
	&=&\frac{1}{(p+1)g_f(0)}\int_0^\infty e^{-t}\int_{P_{u^\perp}K_t(f)}|K_t(f)\cap (y+\langle u\rangle)|^{p+1}dydt.\cr
\end{eqnarray*}
	
\end{proof}

\begin{proof}[Proof of inequality \eqref{eq:FunctionalZhang}]
Let $u\in S^{n-1}$ and define on $h:L\to[0,\infty)$ the function
$$
h(x,t)=|K_t(f)\cap\{(x,t)+\lambda u\,:\,\lambda\geq0\}|,
$$
where $L$ is the epigraph of $-\log\frac{f}{\Vert f\Vert_\infty}$.
Since $L$ is convex, $h$ is concave. For any $p>-1$ we have,
$$
\frac{1}{(p+1)}\int_0^\infty e^{-t}\int_{P_{u^\perp}K_t(f)}|K_t(f)\cap(y+\langle u\rangle)|^{p+1}dxdt=\int_0^\infty\int_{K_t(f)} e^{-t}h(x,t)^p dxdt.
$$
Therefore, by Theorem \ref{thm:FuncBerwald>-1}, for every $-1<p<0$,
$$
\frac{1}{(p+1)\Gamma(1+p)\int_{\R^n}\frac{f}{\Vert f\Vert_\infty}}\int_0^\infty\!  e^{-t}\! \int_{P_{u^\perp}K_t(f)}\! \! \!\! \!  |K_t(f)\cap(y+\langle u\rangle)|^{p+1}dt dx\leq$$ $$\! \leq\! \left(\frac{1}{(n+1)n!\int_{\R^n}\frac{f}{\Vert f\Vert_\infty}}\int_0^\infty\!\! e^{-t}\int_{P_{u^\perp}K_t(f)}\!\!\! \! \! \! |K_t(f)\cap(y+\langle u\rangle)|^{n+1}dt dx\right)^\frac{p}{n}\! \! \!
=\frac{\rho_{\widetilde{K}_n(g_f)}(u)^{p}}{n!^\frac{p}{n}}.$$
Taking limit as $p\to-1$ and by Lemma \ref{rhoK_g} we obtain
$$
\rho_{\widetilde{K}_n(g_f)}(u)\leq 2(n!)^\frac{1}{n}\Vert f\Vert_1\rho_{\Pi^*(f)}(u),
$$
that is,
$$
\widetilde{K}_n(g_f)\subseteq2(n!)^\frac{1}{n}\Vert f\Vert_1\Pi^*\left(f\right).
$$
Taking Lebesgue measure and using Lemmas \ref{lem:g(x)} and \ref{VolumeAndIntegral} we obtain inequality \eqref{eq:FunctionalZhang}.
\end{proof}


\begin{thebibliography}{99?}
	
	
\bibitem[AAGJV]{AAGJV}{\sc Alonso-Guti\'errez D., Artstein-Avidan S., Gonz\'alez Merino B., Jim\'enez C.H., Villa R.},
\textit{Rogers-Shephard and local Loomis-Whitney type inequalities.} Mathematische Annalen, https://doi.org/10.1007/s00208-019-01834-3.

\bibitem[AGJV]{AGJV} {\sc Alonso-Guti\'{e}rrez D, Gonz\'alez Merino B., Jim\'enez, R. Villa C. H.}, \textit{John's ellipsoid and the integral ratio of a log-concave function}, J. Geom. Anal. {\bf 28 (2)} (2018), pp. 1182--1201.

\bibitem[ABG]{ABG}{\sc Alonso-Guti\'errez D.,  Bernu\'es J.,  Gonz\'alez Merino B.} \textit{Zhang's inequality for log-concave functions.} To appear in GAFA Seminar Notes. ArXiv:1810.07507.

\bibitem[AGM]{AGM} {\sc Artstein-Avidan S., Giannopoulos A., Milman V. D.} Asymptotic geometric analysis, Part I (Vol. 202). American Mathematical Soc, 2015.
\bibitem[AKM]{AKM} {\sc Artstein-Avidan S., Klartag M., and Milman V.D} \textit{ The Santal\'o point of a function
and a functional form of Santal\'o inequality}. Mathematika {\bf 51} (2004),pp. 33–-48.

\bibitem[B]{B}{\sc Ball K.} \textit{Logarithmically concave functions and sections of convex sets in $\R^n$}. Studia Math. {\bf 88} (1), (1988). pp. 69--84.

\bibitem[Ber]{Ber}{\sc L. Berwald}, \textit{Verallgemeinerung eines Mittelwetsatzes von J. Favard, f\"ur positive konkave Funktionen}, Acta Math. {\bf 79} (1947), pp. 17--37.

\bibitem[BGVV]{BGVV}{\sc Brazitikos S., Giannopoulos A., Valettas P., Vritsiou B-H,} Geometry of isotropic convex bodies.
Mathematical Surveys and Monographs, {\bf 196}. American Mathematical Society, Providence, RI, (2014).

\bibitem[C]{C} {\sc Colesanti, A.} \textit{Log-Concave Functions}.  Convexity and Concentration. The IMA Volumes in Mathematics and its Applications, {\bf 161}. Springer, New York, 2017, pp. 487-524.


\bibitem[GZ]{GZ} {\sc Gardner R.J., Zhang G.} \textit{Affine inequalities and radial mean bodies.}
Amer. J. Math. {\bf 120}, no.3 (1998), pp. 505--528.

\bibitem[HJM]{HJM} {\sc Haddad J., Jim\'enez C. H., Montenegro M.}, \textit{Sharp affine Sobolev type inequalities via the Lp Busemann–Petty centroid inequality.} J. Funct. Anal. {\bf 271}, no.2 (2016), pp. 454--473.

\bibitem[KM]{KM}{\sc Klartag B., Milman V.}. \textit{Geometry of Log-concave Functions and Measures.}
Geom. Dedicata. {\bf 112} (1), (2005). pp. 169--182.


\bibitem[P]{P} {\sc Petty C.M.} \textit{Isoperimetric problems.}
Proceedings of the Conference on Convexity and Combinatorial Geometry, pp. 26–-41. University of Oklahoma, Norman (1971).





\bibitem[Z1]{Z1}{\sc Zhang G.} \textit{Restricted chord projection and affine inequalities.}
Geom. Dedicata. {\bf 39}, (1991), n.2, pp.213–-222.


\end{thebibliography}
\end{document}